\begin{document}

%%%%%komande za okruzenja i brojaci
\newtheorem{defn}{Definition}[section]
\newtheorem{lemma}[defn]{Lemma}
\newtheorem{ex}[defn]{Example}
\newtheorem{thm}[defn]{Theorem}
\newtheorem{prop}[defn]{Proposition}
\newtheorem{cor}[defn]{Corollary}
\newtheorem{rem}[defn]{Remark}
\newtheorem{hyp}[defn]{Hypothesis}

%%%%%%%%%%%
\title{\bf Invertibility of matrix type operators of infinite order with exponential off-diagonal decay}
%\begin{document}
%%%%%%%%%%%%

\author{
Stevan~Pilipovi\'c\thanks{Faculty of Sciences, University of Novi Sad, Serbia, stevan.pilipovic@dmi.uns.ac.rs} ,
Bojan~Prangoski\thanks{Faculty of Mechanical Engineering, University Ss. Cyril and Methodius, Skopje, Macedonia, bprangoski@yahoo.com} ,
Milica \v Zigi\'c\thanks{Faculty of Sciences, University of Novi Sad, Serbia, milica.zigic@dmi.uns.ac.rs}
	} 
\date{}
\maketitle

%%%%%%%%%%%%%%%%%%%%%%%%%%%%%%%%%%%%%%     

\bigskip

\begin{abstract}
We analyse various exponential off-diagonal decay rates of the elements of infinite matrices and their inverses. It is known that such decay of the elements of an infinite matrix does not imply inverse--closeness, i.e. the inverse, if exists, does not have the same order of decay. We discuss some consequences and extensions of this result.
\end{abstract}

%%%%%%%%%%%%%%%%%
%\maketitle
%%%%%%%%%%%%%%%

%%%%%%%%%%%%%%%%%%%%%%%%%%%%%%%%%%%%%

\section{Introduction, motivation and basic notation}

We study the decay rate of the entries in the inverse of an invertible matrix type operators $A=(A_{s,t})_{s,t}$ where the entries $A_{s,t}$ decrease  exponentially as the distance $d(s,t)\rightarrow \infty$. We call this decay exponential off-diagonal decay. First we explain the general framework of the investigation.

Recall that the index set $\Lambda\subseteq \mathbb{R}^d$ is called a lattice if $\Lambda=G\mathbb{Z}^d$, where $G$ is a non-singular matrix. Given a complex, separable Hilbert space $H$ we denote by $\mathcal{L}(H)$ the Banach algebra of all bounded linear operators on $H$. If $A\in \mathcal{L}(H)$ then we can represent $A$ as a matrix (which we still denote by $A$) with respect to any complete orthonormal set. We do not loose on generality if we restrict our investigations to the case $H=\ell^2(\Lambda)$ with the ordinary operator norm  $\|A\|$.

A convolution Banach algebra of sequences of polynomial decay $\mathcal A$ is analysed in \cite{Groh2} as a tool for the analysis of the  corresponding  extension of  the Sj\"ostrand class $M^{\infty,1}(\mathbb{R}^{2d})$ (cf. \cite{JS}). The new symbol class $\widetilde{M}^{\infty, \mathcal A}$ is also characterised by the mean of the Toeplitz  matrices $\mathcal C_{\mathcal A}$ with the diagonal elements determined by the elements of $\mathcal A$.

Consider the class of matrix type operator $A$ satisfying the estimate $$|A_{s,t}|\leq C \omega,\; s,t\in\Lambda,$$
with $\omega=\omega_p =(1+|s-t|)^{-k/2}$, for some $k>0$, (i.e. the class of matrices with polynomial off-diagonal decay) or $\omega=\omega_{se} =e^{k|s-t|^\beta}$ for some $\beta\in(0,1)$, (i.e. the class of matrices with sub-exponential off-diagonal decay). One of the most important properties that these classes share is that they are spectrally invariant. This is explained in \cite{sjaff} and later in \cite{Stro}, \cite{GL} and \cite{Groh2}; here we just recall that spectral invariance (or Wiener type property) means that if $A$ is invertible in $\ell^2$, then its inverse has the same off-diagonal decay order (polynomial or sub-exponential).

Recently, a series of papers \cite{cgnr1,cnr2,CNR,cnr3,cnr4} were related to the matrix type characterisation of various classes of pseudo-differential operators and Fourier integral operators through the matrix representation related to the Gabor wave packets and the almost diagonalisation of $\Psi$DOs and FIOs as was suggested for Sj\"ostrand or Gr\"ochenig--Rzeszotnik class of $\Psi$DOs.

Our results are presented as follows. In the second section, we recall the results for band limited matrices \cite{Demko} and show by a simple example specific cases which are not covered by the results of \cite{Demko}. Our example shows that the class of matrices with super-exponential off-diagonal decay is not spectrally invariant. As it is shown by Jaffard \cite{sjaff}, in the case of exponential off-diagonal decay, the rate of the decay does not stay unchanged for the inverse matrix; see \cite{PS} for results concerning such type of operators in the context of Fr\'echet frames. Therefore, in section 3 we give some precise estimates on the decay rate of the inverse matrix which follow from Jaffard's theorem (cf. \cite{PS} with $d(s,t)=|s-t|$).

Our main result is given in the fourth section where we consider different exponential off-diagonal decay rate assumptions and obtain new sufficient conditions characterising matrix type operators. Finally, we consider invertible matrices of infinite order which satisfy estimates of the form $$|A_{s,t}|\leq C_pe^{-pd(s,t)},\;s,t\in\Lambda, \quad\mbox{with}\quad C_p\leq Ke^{p\varphi(p)},\;p\in\mathbb{N},$$ where $\varphi(p)$ is a strictly increasing function which satisfies certain conditions. In the main result, we provide estimates on the off-diagonal decay rate of the entries of $A^{-1}$.

%%%%%%%%%%%%%%%%%%%%%%%%%%%%%%%%%%%%%%%%%%%%%%%%%%

\section{Band limited matrices and sub-exponential off-diagonal decay matrices}

From the chronological point of view, important results were obtained  in \cite{Demko}, where a special class of $m-$banded matrices was considered. Recall that a matrix $A=(A_{s,t})_{\Lambda\times \Lambda}$ is $m-$banded if $A_{s,t}=0$ for $s,t\in\Lambda$, $|s-t|>m$. It is proved in \cite[Proposition 2.3]{Demko} that for an $m-$banded invertible matrix operator $A\in \mathcal{L}(\ell^2(\Lambda))$ with $(A^{-1}_{s,t})_{\Lambda\times\Lambda}=A^{-1}\in \mathcal{L}(\ell^2(\Lambda))$, the following estimate holds true
\begin{equation}\label{D}
|A^{-1}_{s,t}|\leq Ce^{\frac{1}{m}(\ln (1-\frac{2}{\sqrt{\kappa}+1})|s-t|)},\quad s,t\in \Lambda,
\end{equation}
where $C$ is a certain positive constant depending on $m,A$ and $[a,b]$, $a>0$, where $[a,b]$ is the smallest interval containing the spectrum of $AA^*$, while $\kappa=b/a$.

The following result of Jaffard \cite{sjaff} is more general since the band limitation is not assumed. Denote by $\mathcal{E}_\gamma$, $\gamma>0$, the space of matrices $A=(A_{s,t})_{\Lambda\times\Lambda}$ whose entries satisfy:
\begin{align}\label{*}(\exists C_{A,\gamma}\geq 1)\; |A_{s,t}|\leq C_{A,\gamma}e^{-\gamma d(s,t)},\,\,\, s,t\in\Lambda.\end{align}

\begin{thm}(\cite[Propositions 2]{sjaff})\label{t Jaf}
Let $A:\ell^2(\Lambda)\to\ell^2(\Lambda)$ be an invertible matrix on $\ell^2(\Lambda)$. If $A\in \mathcal{E}_\gamma,$ then $A^{-1}\in \mathcal{E}_{\gamma_1}$ for some $\gamma_1\in (0,\gamma)$.
\end{thm}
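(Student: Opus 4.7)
The plan is to use a one-parameter family of similarity transformations built from diagonal weight operators, together with the stability of invertibility under small perturbations in operator norm. Fix a function $\phi:\Lambda\to\R$ that is $1$-Lipschitz with respect to $d$, i.e.\ $|\phi(s)-\phi(t)|\le d(s,t)$, and for a real parameter $\lambda$ let $D_\lambda$ be the (possibly unbounded) diagonal operator with entries $e^{\lambda\phi(s)}$. Formally define $A(\lambda):=D_\lambda A D_\lambda^{-1}$, so that the entries are
\[
A(\lambda)_{s,t}=e^{\lambda(\phi(s)-\phi(t))}A_{s,t}.
\]
The estimate \eqref{*} together with the Lipschitz property yields
\[
|A(\lambda)_{s,t}|\le C_{A,\gamma}\,e^{-(\gamma-|\lambda|)d(s,t)},
\]
so for $|\lambda|<\gamma$ Schur's test shows that $A(\lambda)\in\mathcal{L}(\ell^2(\Lambda))$.

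Next I would show that $A(\lambda)\to A$ in operator norm as $\lambda\to 0$, uniformly over all $1$-Lipschitz $\phi$. Writing $|e^{x}-1|\le|x|e^{|x|}$ and using $|\phi(s)-\phi(t)|\le d(s,t)$ gives
\[
|A(\lambda)_{s,t}-A_{s,t}|\le C_{A,\gamma}\,|\lambda|\,d(s,t)\,e^{-(\gamma-|\lambda|)d(s,t)},
\]
whose right-hand side is independent of $\phi$ and, by Schur's test, furnishes an operator-norm bound on $A(\lambda)-A$ that tends to $0$ as $\lambda\to 0$. Because the invertible elements of $\mathcal{L}(\ell^2(\Lambda))$ form an open set, there exists $\lambda_0\in(0,\gamma)$ (depending only on $\gamma$, $C_{A,\gamma}$, $\|A^{-1}\|$ and the geometry of $\Lambda$ through Schur's test) such that for every $|\lambda|<\lambda_0$ and every $1$-Lipschitz $\phi$, $A(\lambda)$ is invertible with $\|A(\lambda)^{-1}\|\le 2\|A^{-1}\|$.

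Now the key observation: formally, $A(\lambda)^{-1}=D_\lambda A^{-1} D_\lambda^{-1}$, so its matrix entries are exactly $e^{\lambda(\phi(s)-\phi(t))}(A^{-1})_{s,t}$. Since every matrix entry is bounded by the operator norm, I obtain the pointwise estimate
\[
|(A^{-1})_{s,t}|\,e^{\lambda(\phi(s)-\phi(t))}\le 2\|A^{-1}\|,\qquad s,t\in\Lambda,
\]
valid for every $1$-Lipschitz $\phi$ and every $|\lambda|<\lambda_0$. For fixed $t_0\in\Lambda$ the function $\phi(s):=d(s,t_0)$ is $1$-Lipschitz by the triangle inequality and satisfies $\phi(s_0)-\phi(t_0)=d(s_0,t_0)$. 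Applying the estimate with this choice at $(s_0,t_0)$ and setting $\gamma_1:=\lambda\in(0,\lambda_0)\subset(0,\gamma)$ yields
\[
|(A^{-1})_{s_0,t_0}|\le 2\|A^{-1}\|\,e^{-\gamma_1 d(s_0,t_0)},
\]
so $A^{-1}\in\mathcal{E}_{\gamma_1}$.

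The main obstacle in this scheme is securing \emph{uniformity in $\phi$}: the perturbation bound on $\|A(\lambda)-A\|$ and the consequent invertibility radius must be independent of the particular $1$-Lipschitz function used, for otherwise one cannot choose $\phi$ tailored to each pair $(s_0,t_0)$ after fixing $\lambda$. This is exactly why I carry out the estimate through a Schur bound depending only on $\gamma,|\lambda|$ and the constant in \eqref{*}, rather than through $\phi$-specific quantities; all other steps are then rearrangements of standard operator-theoretic facts.
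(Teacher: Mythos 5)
Your argument is correct in substance but follows a genuinely different route from the paper. The paper reproduces Jaffard's original scheme: it passes to $\tilde{A}=AA^*/\|A\|^2$, writes $\tilde{A}^{-1}=\operatorname{Id}+\sum_{n\geq1}R^n$ with $R=\operatorname{Id}-\tilde{A}$, bounds each $|R^n_{s,t}|$ in two ways (by $r^n$ with $r=\|R\|<1$, and by $M^n e^{-\gamma' d(s,t)}$ with a geometrically growing constant coming from iterated Schur-type convolution estimates), and optimizes the splitting point of the series as a function of $d(s,t)$. You instead run a Combes--Thomas-type argument: conjugation by exponential weights $e^{\lambda\phi}$ with $\phi$ ranging over $1$-Lipschitz functions, a perturbation bound $\|A(\lambda)-A\|\to0$ that is uniform in $\phi$, openness of the invertibles, and extraction of the entrywise bound from $\|A(\lambda)^{-1}\|\leq 2\|A^{-1}\|$ with $\phi=d(\cdot,t_0)$. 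Both proofs rely on the summability hypothesis $m_\varepsilon=\sup_s\sum_t e^{-\varepsilon d(s,t)}<\infty$ (the paper's \eqref{d exp}), yours through Schur's test. Your approach is arguably more conceptual and avoids the Neumann-series combinatorics, and it yields its own explicit constants ($\gamma_1=\lambda_0$, $C=2\|A^{-1}\|$); the paper's approach is retained precisely because the authors want the particular quantitative form \eqref{form}, which they then refine in Theorem \ref{sjt}. One point you should tighten: since $\phi(s)=d(s,t_0)$ is unbounded, $D_\lambda$ is unbounded and the identity $A(\lambda)^{-1}=D_\lambda A^{-1}D_\lambda^{-1}$ is only formal as written. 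The standard repair is to work with the truncations $\phi_N=\min(\phi,N)$, which are bounded and still $1$-Lipschitz, so that $D_{\lambda}AD_{\lambda}^{-1}$ is an honest similarity and the inverse is $D_{\lambda}A^{-1}D_{\lambda}^{-1}$; the uniformity in $\phi$ that you already emphasize guarantees the same $\lambda_0$ and the same norm bound for every $N$, and letting $N\geq d(s_0,t_0)$ recovers your entrywise estimate. With that emendation the proof is complete.
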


The next simple example shows that for any $k\in \mathbb{Z}_+$ one can construct a matrix $A$ that belongs to $\mathcal{E}_\gamma$ for all $\gamma>0$, but the inverse $A^{-1}$ is in $\mathcal{E}_{\gamma}$ only for $\gamma\leq 1/k$. Actually this example is mentioned in \cite[Section 3]{Demko} in another general form (more precisely, a matrix $AA^*$ is considered) as one for which \eqref{D} gives useless estimate (as it was written in \cite{Demko}). Because of simplicity, the explicit estimate of the growth rate and for the sake of further comments concerning the super-exponential growth (see Remark \ref{rem 1} - 2.), we proceed with it.

\begin{ex}\label{ex}
Let $k\in \mathbb{Z}_+.$ Consider the matrix $A=I-\Gamma$, where $I$ is the identity matrix and the elements of  $\Gamma=(\Gamma_{s,t})_{\mathbb{Z}\times\mathbb{Z}}$ are given by $\Gamma_{s,t}=e^{-1/k}$ if $s+1=t,\;s,t\in \mathbb{Z}$ and $\Gamma_{s,t}=0$ otherwise, i.e.
	$$\Gamma=
	\begin{pmatrix}
	\ddots & \vdots & \vdots & \vdots & \vdots &   \\
	\dots  & 0 & e^{-1/k} &0 & 0 &\dots  \\
	\dots  & 0 & 0 & e^{-1/k} & 0 & \dots  \\
	\dots  & 0 & 0 & 0 & e^{-1/k} & \dots  \\
	\dots  & 0 & 0 & 0 & 0 & \dots  \\
	       & \vdots & \vdots & \vdots & \vdots & \ddots
	\end{pmatrix}
	\;\mbox {and}
	$$
	$$ A=I-\Gamma=
	\begin{pmatrix}
	\ddots & \vdots & \vdots & \vdots & \vdots &   \\
	\dots  & 1 & -e^{-1/k} &0 & 0 &\dots  \\
	\dots  & 0 & 1 & -e^{-1/k} & 0 & \dots  \\
	\dots  & 0 & 0 & 1 & -e^{-1/k} & \dots  \\
	\dots  & 0 & 0 & 0 & 1 & \dots  \\
	       & \vdots & \vdots & \vdots & \vdots & \ddots
	\end{pmatrix}
	$$
(in other words, $\Gamma$ is $e^{-1/k}$ times the backward shift on $\ell^2$). Clearly, for every $\gamma>0$ there exists $C_{\gamma}>0$ such that $|A_{s,t}|\leq C_\gamma e^{-\gamma |s-t|}$, $s,t\in \mathbb{Z}$; i.e. $A$ belongs to $\bigcap_{\gamma>0} \mathcal{E}_\gamma$. As $\|\Gamma\|=e^{-1/k}<1$, $A^{-1}$ exists and $A^{-1}=\sum_{n=0}^\infty \Gamma^n$. One easily verifies that the entries in $\Gamma^n=(\Gamma^n_{s,t})_{\mathbb{Z}\times\mathbb{Z}}$ are given by $\Gamma^n_{s,t}=e^{-n/k}$ if $s+n=t$, $n\in\mathbb{Z}_+,\;s,t\in \mathbb{Z}$, and $\Gamma^n_{s,t}=0$ otherwise. Consequently, the entries in the inverse $A^{-1}=(B_{s,t})_{\mathbb{Z}\times\mathbb{Z}}$ are as follows: $B_{s,t}=e^{-(1/k)|s-t|}$ for $s\leq t,\;s,t\in\mathbb{Z}$ and $B_{s,t}=0$ for $s>t,\;s,t\in\mathbb{Z}$; i.e.
	$$A^{-1}=B=
	\begin{pmatrix}
	\ddots & \vdots & \vdots & \vdots & \vdots &   \\
	\dots  & 1 & e^{-1/k} &e^{-2/k} & e^{-3/k} &\dots  \\
	\dots  & 0 & 1 & e^{-1/k} & e^{-2/k} & \dots  \\
	\dots  & 0 & 0 & 1 & e^{-1/k} & \dots  \\
	\dots  & 0 & 0 & 0 & 1 & \dots  \\
	       & \vdots & \vdots & \vdots & \vdots & \ddots
	\end{pmatrix}
	$$
Hence, one obtains
$$|B_{s,t}|=e^{-(1/k)|s-t|},\quad\mbox{for}\quad s\leq t,\;s,t\in\mathbb{Z},$$
so $A^{-1}\in \mathcal{E}_{\gamma}$ only for $\gamma\leq 1/k$.
	
	%We note that $A^{-1}=A^*(AA^*)^{-1}\; (\mbox{and } \|A^*\|=\|A\|)$	so the the growth estimates for $A^{-1}$ and $(AA^*)^{-1}$ are the same.	
\end{ex}

The most general result for the sub-exponential growth is obtained in \cite{GL}: Assume that $\rho:[0,\infty)\to [0,\infty)$ is a strictly increasing concave and normalised ($\rho(0)=0$) function that satisfies
\begin{align}\label{**}\lim_{\xi\to \infty}\frac{\rho(\xi)}{\xi}=0.\end{align}
A function $u$ is an admissible weight if $u(x)=e^{\rho(|x|)}$, $x\in \mathbb{Z}^d$.
Let $s>d$ and let the weight $v$ be given by $v(x)=u(x)(1+|x|)^s$, $x\in \mathbb{Z}^d$. Then \cite[Corollary 11]{GL} states the following: if there exists $C>0$ such that
$$|A_{s,t}|\leq C v(s-t)^{-1},\quad s,t\in \mathbb{Z}^d,$$
then there is $C_1>0$ such that
$$|A^{-1}_{s,t}|\leq C_1 v(s-t)^{-1},\quad s,t\in\mathbb{Z}^d.$$
This assertion implies the next one which shows that stronger sub-exponen\-tial off-diagonal decay for the matrix $A$ implies the stronger sub-exponential off-diagonal decay for the inverse matrix $A^{-1}$ (cf. Remark \ref{rem 1} - 1.).

\begin{prop}\label{vckltp135}
Let $A=(A_{s,t})_{\mathbb{Z}^d\times\mathbb{Z}^d}\in\mathcal{L}(\ell^2(\mathbb{Z}^d))$ be invertible with inverse $A^{-1}=(A^{-1}_{s,t})_{\mathbb{Z}^d\times\mathbb{Z}^d}$. Assume that
\begin{equation}\label{a}
(\forall k>0)(\exists C_k>0)\quad
|A_{s,t}|\leq C_k e^{-k\rho(|s-t|)},\;s,t\in\mathbb{Z}^d,
\end{equation}
where $\rho$ satisfies the above assumptions. Moreover, assume that for every $\varepsilon>0$ there exists $\tilde{C}_{\varepsilon}>0$ such that
\begin{equation}\label{b}
\tilde C_{\varepsilon}\frac{e^{\varepsilon\rho(\xi)}}{(1+\xi)^{d+1}}\geq 1,\,\,\, \mbox{for all}\,\, \xi\geq 0.\end{equation}
Then
\begin{equation}\label{c}
(\forall k>0)(\exists C'_k>0)\quad|A_{s,t}^{-1}|\leq C'_{k}e^{-k \rho(|s-t|)},\;s,t\in\mathbb{Z}^d.
\end{equation}
\end{prop}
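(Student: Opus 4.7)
The proposition asks for inverse-closedness in the intersection of the scale $|A_{s,t}|\le C_k e^{-k\rho(|s-t|)}$ as $k$ ranges over $(0,\infty)$. The natural approach is to fix a level $k>0$ in the \emph{target} estimate \eqref{c}, reduce it to a single application of Gr\"ochenig--Leinert's Corollary~11 (already recalled above) by choosing the weight $v$ adapted to $k$, and extract the required estimate on the inverse from it. In other words, I would leverage the hypothesis \eqref{a} at a slightly better rate $k+\varepsilon$ in order to pay the polynomial factor $(1+|x|)^{d+1}$ needed by the cited result, while the rate $k$ survives after inversion.

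More precisely, fix $k>0$ and set
\[
u_k(x)=e^{k\rho(|x|)},\qquad v_k(x)=u_k(x)(1+|x|)^{d+1},\quad x\in\mathbb{Z}^d.
\]
Since $\rho$ is strictly increasing, concave, normalised and satisfies \eqref{**}, so does $k\rho$; hence $u_k$ is admissible in the sense preceding Corollary~11 of \cite{GL}, and $v_k$ has the form required there with $s=d+1>d$. The first step is to verify that $A$ has entries bounded by $Cv_k(s-t)^{-1}$: choose $\varepsilon>0$ (for instance $\varepsilon=1$), use \eqref{a} with constant $k+\varepsilon$ in place of $k$, and combine it with \eqref{b} to get
\[
|A_{s,t}|\,e^{k\rho(|s-t|)}(1+|s-t|)^{d+1}\le C_{k+\varepsilon}(1+|s-t|)^{d+1}e^{-\varepsilon\rho(|s-t|)}\le C_{k+\varepsilon}\tilde C_{\varepsilon},
\]
which is exactly the hypothesis of \cite[Corollary~11]{GL} for the admissible weight $u_k$.

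The second step is to apply that corollary: it yields a constant $C'>0$ (depending on $k$ through $u_k$) such that
\[
|A^{-1}_{s,t}|\le \frac{C'}{v_k(s-t)}=C'e^{-k\rho(|s-t|)}(1+|s-t|)^{-(d+1)}\le C'e^{-k\rho(|s-t|)},
\]
which is \eqref{c} with $C'_k:=C'$. Since $k>0$ was arbitrary, the conclusion follows. I do not anticipate any real obstacle here: the only nontrivial ingredient is Corollary~11 of \cite{GL}, which is assumed; the role of condition \eqref{b} is solely to absorb the polynomial factor $(1+|s-t|)^{d+1}$ required to match the weight $v$ into the exponential rate by sacrificing an arbitrarily small $\varepsilon$ in the decay, so the slight loss from $k+\varepsilon$ to $k$ is exactly what \eqref{b} is designed to compensate.
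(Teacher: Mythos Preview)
Your proposal is correct and follows essentially the same route as the paper: fix $k>0$, use hypothesis \eqref{a} at a slightly higher level (the paper takes $2k$, you take $k+\varepsilon$), invoke \eqref{b} to absorb the polynomial factor $(1+|s-t|)^{d+1}$, and then apply \cite[Corollary~11]{GL} with the admissible weight $u_k(x)=e^{k\rho(|x|)}$ and $v_k(x)=u_k(x)(1+|x|)^{d+1}$. The only difference is the cosmetic choice of how much extra decay to sacrifice, which is immaterial.
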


\begin{proof}
Let $k>0$ be arbitrary but fixed. Employing \eqref{a} and \eqref{b}, we infer
$$|A_{s,t}|\leq C_{2k} e^{-2k\rho(|s-t|)}\leq C_{2k}\tilde{C}_k\frac{e^{-k\rho(|s-t|)}}{(1+|s-t|)^{d+1}},\;s,t\in\mathbb{Z}^d.$$
Consequently, we can invoke \cite[Corollary 11]{GL} to deduce the claim in the proposition.
\end{proof}

\begin{rem}
Clearly, the function $\rho(\xi)=\xi^\beta$, $\beta\in (0,1)$, $\xi\geq 0$, satisfies the conditions in Proposition \ref{vckltp135}.
\end{rem}

%%%%%%%%%%%%%%%%%%%%%%%%%%%%%

\section{Jaffard result and additional estimates}

In this section we analyse, in details, the decay of the inverse matrices of infinite order if the original matrices have exponential off-diagonal decay, that is, if $\rho(\xi)/\xi$ does not converge to zero. For example this is the case if $\rho(\xi)=c\xi$, $\xi\geq 0$. Actually, we follow the proof of Jaffard theorem in \cite{sjaff} and give precise estimates. We consider in \cite{PS} the special case $\rho(\xi)=c\xi$, $\xi\geq 0$, and $v(x)=e^{\rho(|x|)}$, $x\in \mathbb{Z}^d.$ We give just a part of the proof in order to give better estimates for the rate of decay of the elements of the inverse matrices.

Let $\Lambda$ be a discrete index set endowed with a distance $d$ which satisfies the following assumption:
\begin{equation}\label{d exp}
m_{\varepsilon}:=\sup_{s\in \Lambda
}\;\sum_{t\in \Lambda}e^{-\varepsilon d(s,t)}<\infty,\,\,\, \forall \varepsilon>0.
\end{equation}
Note that $m_{\varepsilon}\geq 1$, $\forall\varepsilon>0$, the function $\varepsilon\mapsto m_{\varepsilon}$, $(0,\infty)\rightarrow [1,\infty)$, is decreasing and $m_{\varepsilon}\to \infty$, as $\varepsilon\to 0^+$.

\begin{thm}\label{Jaf detail}
Let $A=(A_{s,t})_{\Lambda\times\Lambda}:\ell^2(\Lambda)\to\ell^2(\Lambda)$ be an invertible matrix on $\ell^2(\Lambda)$ with inverse $(A^{-1}_{s,t})_{\Lambda\times\Lambda}=A^{-1}\in \mathcal{L}(\ell^2(\Lambda))$. Assume that $A\in\mathcal{E}_\gamma,$ that is, there exists $C_\gamma\geq 1$ so that
$$|A_{s,t}|\leq C_\gamma e^{-\gamma d(s,t)}\quad\mbox{for all}\quad s,t\in \Lambda.$$
Then there exist constants $\gamma_1\in(0,\gamma)$ and $C_{A,\gamma_1}>0$ such that
\begin{equation}\label{j ocena}
|A_{s,t}^{-1}|\leq C_{A,\gamma_1} e^{-\gamma_1 d(s,t)}\quad\mbox{for all}\quad s,t\in \Lambda.
\end{equation}
Furthermore, \eqref{j ocena} holds true with
\begin{equation}\label{form}
\gamma_1 = \min\left\{\delta, \frac{(\gamma'-\delta)\ln(1/r)}{\ln \left(\tilde{C} C^2_{\gamma}r^{-1}m^2_{(\gamma-\gamma')/2}\right)}\right\},\quad C_{A,\gamma_1}= \frac{2C_{\gamma}m_{\gamma-\gamma_1}}{(1-r)\|A\|^2}
\end{equation}
where $\gamma',\delta\in (0,\gamma)$, $0<\delta<\gamma'<\gamma$ are arbitrary,
$r=\|\operatorname{Id}-\|A\|^{-2}AA^*\|$ and $\tilde{C}=1+\|A\|^{-2}$.
\end{thm}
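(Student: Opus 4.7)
The plan is to adapt the Neumann--series strategy that underlies Jaffard's proof while keeping every constant explicit. The starting point is the identity
\[
A^{-1}=\|A\|^{-2}A^*\sum_{n=0}^\infty T^n,\qquad T:=\operatorname{Id}-\|A\|^{-2}AA^*,
\]
which is legitimate since $T$ is self-adjoint with $\|T\|=r<1$ (the spectrum of $\|A\|^{-2}AA^*$ lies in $(0,1]$ and is bounded away from $0$ by invertibility of $A$).

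For fixed $\gamma'\in(0,\gamma)$, I would prove a one-step product estimate using \eqref{d exp} and the triangle inequality $d(s,t)+d(t,u)\geq d(s,u)$. The key computation is the splitting
\[
\sum_{t}e^{-\gamma d(s,t)-\gamma d(t,u)}\leq m_{(\gamma-\gamma')/2}\, e^{-\gamma' d(s,u)},
\]
obtained by first extracting $e^{-\gamma' d(s,u)}$ from the part with decay rate $(\gamma+\gamma')/2$ and then bounding the residual sum by $m_{(\gamma-\gamma')/2}$. Applying this to $AA^*$ (and absorbing the identity term via $\tilde C=1+\|A\|^{-2}$) yields
\[
|T_{s,t}|\leq \tilde C C_\gamma^2 m_{(\gamma-\gamma')/2}\,e^{-\gamma' d(s,t)}.
\]

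Iterating the same splitting, but now between $\gamma'$ and $\gamma'-\delta$, induction on $n$ produces the chain estimate
\[
|(T^n)_{s,t}|\leq K^n e^{-(\gamma'-\delta)d(s,t)},\qquad K=\tilde C C_\gamma^2 m^2_{(\gamma-\gamma')/2},
\]
the second factor $m_{(\gamma-\gamma')/2}$ appearing at each multiplication step from the sum over intermediate indices. In parallel, the operator norm provides the alternative bound $|(T^n)_{s,t}|\leq r^n$. The heart of the argument is then to split $\sum_n T^n$ at an integer threshold $N=N(s,t)$ chosen proportional to $d(s,t)$: apply the chain bound for $n<N$ and the norm bound for $n\geq N$. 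Balancing the two tails by equalising $K^{N}e^{-(\gamma'-\delta)d(s,t)}$ with $r^{N}$ gives exactly the exponent $(\gamma'-\delta)\ln(1/r)/\ln(K/r)$ appearing in \eqref{form}; the clipping by $\delta$ reflects that $\gamma_1$ cannot exceed the natural threshold at which the chain bound saturates before the norm bound starts to dominate.

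Finally, one multiplies the resulting estimate on $(\operatorname{Id}-T)^{-1}$ by $\|A\|^{-2}A^*$: since $A^*\in\mathcal{E}_\gamma$ with the same constant $C_\gamma$, a single further application of the product estimate contributes the factor $C_\gamma m_{\gamma-\gamma_1}$, while the geometric tail yields the $(1-r)^{-1}$, and together they produce the constant $C_{A,\gamma_1}$ of \eqref{form}. The main technical obstacle will be the careful tracking of constants through the optimisation of $N$, together with the verification that the splitting parameter $(\gamma-\gamma')/2$ can be re-used uniformly at every step without further deterioration of the bound.
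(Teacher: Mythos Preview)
Your strategy—Neumann series for $(\operatorname{Id}-T)^{-1}$, entrywise product estimates via the triangle inequality and \eqref{d exp}, combining the chain bound with the operator-norm bound $r^n$ at a threshold $n_0\sim d(s,t)$, then post-composing with $\|A\|^{-2}A^*$—is exactly the paper's route, and the final step producing $C_{A,\gamma_1}$ is correctly described.

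One detail is mis-located, though, and as written it would not produce the stated constants. The parameter $\delta$ does \emph{not} enter through the chain estimate on $T^n$. The paper uses the midpoint $\gamma''=(\gamma+\gamma')/2$ twice: the product estimate for $AA^*$ already gives $|T_{s,t}|\le \tilde C C_\gamma^2\, m_{(\gamma-\gamma')/2}\,e^{-\gamma'' d(s,t)}$ (your version with $e^{-\gamma' d}$ throws away half the gap), and the iteration, losing from $\gamma''$ down to $\gamma'$, picks up a further $m_{\gamma''-\gamma'}=m_{(\gamma-\gamma')/2}$ at each step. This is why $K=\tilde C C_\gamma^2 m_{(\gamma-\gamma')/2}^2$ carries the square, and the resulting chain bound is $|(T^n)_{s,t}|\le K^n e^{-\gamma' d(s,t)}$, with decay $\gamma'$, not $\gamma'-\delta$. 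If instead you iterated ``between $\gamma'$ and $\gamma'-\delta$'' as you describe, each step would contribute $m_{\delta}$ rather than $m_{(\gamma-\gamma')/2}$, and $K$ would be wrong.

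The $\delta$ actually enters only through the choice of cutoff: $n_0$ is the largest $n$ with $(K/r)^n\le e^{(\gamma'-\delta)d(s,t)}$, equivalently $K^n e^{-\gamma' d}\le r^n e^{-\delta d}$. For $n\le n_0$ one therefore bounds the chain term by $r^n e^{-\delta d}$ and sums to get $e^{-\delta d}/(1-r)$; for $n>n_0$ the geometric tail $r^{n_0+1}/(1-r)$ gives the second exponent in \eqref{form}. That is the origin of the $\min\{\delta,\cdot\}$, not a saturation of the chain bound itself.
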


\begin{proof} The first part concerning the existence of $\gamma_1\in(0,\gamma)$ and $C_{A,\gamma_1}>0$ for which \eqref{j ocena} holds true is the same as in \cite{sjaff} and \cite{PS}. We add several new details to deduce the validity of \eqref{j ocena} with the constants given by \eqref{form}.\\
\indent Let $\tilde{A}=(\tilde{A}_{s,t})_{\Lambda\times\Lambda}=AA^*/\|A\|^2$, $R=(R_{s,t})_{\Lambda\times\Lambda}=\operatorname{Id}-\tilde{A}$ and denote $r=\|R\|$. As $\|AA^*\|=\|A\|^2$, we infer $r<1$ (this is the same constant $r$ as given in the statement of the theorem). Clearly $\tilde{A}^{-1}=(\tilde{A}^{-1}_{s,t})_{\Lambda\times\Lambda}=\operatorname{Id}+\sum_{n=1}^{\infty}R^n$. Let $\gamma',\delta\in (0,\gamma)$, $0<\delta<\gamma'<\gamma$, be arbitrary but fixed and denote $\gamma''=(\gamma'+\gamma)/2$. A straightforward computation yields
\begin{equation*}
|\tilde{A}_{s,t}|\leq C_{\gamma}^2\|A\|^{-2}m_{\gamma-\gamma''}e^{-\gamma''d(s,t)},\,\, \mbox{for all}\,\, s,t\in\Lambda.
\end{equation*}
As $m_{\gamma-\gamma''}\geq1$, the above implies $|R_{s,t}|\leq \tilde{C}C_{\gamma}^2m_{\gamma-\gamma''}e^{-\gamma''d(s,t)}$ with $\tilde{C}$ as in the statement of the theorem. Denoting $(R^n_{s,t})_{\Lambda\times\Lambda}=R^n$, $n\in\mathbb{Z}_+$, we readily deduce $|R^n_{s,t}|\leq r^n$, $s,t\in\Lambda$, $n\in\mathbb{Z}_+$. Furthermore, in the same way as in the proof of \cite[Proposition 2]{sjaff}, the above estimate for $R_{s,t}$ implies (notice that $m_{\gamma-\gamma''}=m_{\gamma''-\gamma'}=m_{(\gamma-\gamma')/2}$)
\begin{equation*}
|R^n_{s,t}|\leq (\tilde{C} C^2_{\gamma}m_{\gamma-\gamma''})^{n} m_{\gamma''-\gamma'}^{n-1}e^{-\gamma'd(s,t)}
\leq (\tilde{C} C^2_{\gamma}m^2_{(\gamma-\gamma')/2})^{n} e^{-\gamma'd(s,t)},
\end{equation*}
for all $ s,t\in\Lambda,\, n\in\mathbb{Z}_+.$ Consequently,
\begin{equation*}
|\tilde{A}^{-1}_{s,t}|\leq \sum_{n=0}^\infty \min\left\{r^n, (\tilde{C} C^2_{\gamma}m^2_{(\gamma-\gamma')/2})^{n} e^{-\gamma'd(s,t)}\right\},\,\, \mbox{for all}\,\, s,t\in\Lambda.
\end{equation*}
For each $s,t\in\Lambda$, let $n_0=n_0(s,t)\in\mathbb{N}$ be the largest $n\in\mathbb{N}$ such that
\begin{align*}
\left(\tilde{C} C^2_{\gamma}r^{-1}m^2_{(\gamma-\gamma')/2}\right)^n&\leq  e^{(\gamma'-\delta) d(s,t)};
\end{align*}
since the left hand side increases with $n$ the above holds for all $n\leq n_0$, $n\in\mathbb{N}$, and $n_0$ is given by
$$
n_0=\left \lfloor \frac{(\gamma'-\delta) d(s,t)}{\ln \left(\tilde{C} C^2_{\gamma}r^{-1}m^2_{(\gamma-\gamma')/2}\right)}\right\rfloor.
$$
Consequently,
\begin{equation}\label{j eksp}
r^{n_0+1}\leq e^{E(\gamma',\delta) \ln r d(s,t)},\,\,\, \mbox{with}\,\, E(\gamma',\delta)=\frac{(\gamma'-\delta)}{\ln \left(\tilde{C} C^2_{\gamma}r^{-1}m^2_{(\gamma-\gamma')/2}\right)}.
\end{equation}
We infer
\begin{eqnarray*}
|\tilde{A}^{-1}_{s,t}|&\leq& e^{-\delta d(s,t)}\sum_{n=0}^{n_0}(\tilde{C} C^2_{\gamma}m^2_{(\gamma-\gamma')/2})^{n}e^{-(\gamma'-\delta)d(s,t)} +\sum_{n=n_0+1}^{\infty}r^n\\
&\leq&e^{-\delta d(s,t)}\sum_{n=0}^{n_0}r^n +e^{E(\gamma',\delta) \ln r d(s,t)}\sum_{n=0}^{\infty}r^n\\
&\leq &\frac{e^{-\delta d(s,t)}}{1-r} + \frac{e^{-E(\gamma',\delta) \ln (1/r) d(s,t)}}{1-r}.
\end{eqnarray*}
As $A^{-1}=A^*\tilde{A}^{-1}/\|A\|^2$, we conclude the validity of \eqref{j ocena} with $\gamma_1$ and $C_{A,\gamma_1}$ given by \eqref{form}.
\end{proof}

\begin{rem} As we mentioned before, if $\gamma'\to \gamma$ then $m_{\gamma-\gamma'}\to \infty$. Therefore, the constant $\gamma_1$ can be arbitrary close to $0$ when $\gamma'$ approaches $\gamma$ and $\delta$ approaches $0$.
\end{rem}

%%%%%%%%%%%%%%%%%%%%%%%%%%%%%%

\section{Estimates depending on the growth rate of constants}

\begin{rem}\label{rem 1}
\begin{enumerate}
\item Example \ref{ex} shows that, for arbitrary $k\in \mathbb{Z}_+$, there is a matrix $A\in \bigcap_{\gamma>0}\mathcal{E}_\gamma$ such that $A^{-1}\in \mathcal{E}_{\gamma_1}$ only for $\gamma_1\leq 1/k$. So, at best, one can claim that, even if $A\in \bigcap_{\gamma>0}\mathcal{E}_\gamma,$ the inverse $A^{-1}\in \bigcup_{\gamma>0}\mathcal{E}_\gamma.$

\item The super-exponential decay can not give the spectral invariance. Let $\beta>1$ and $|A_{s,t}|\leq C_p e^{-p|s-t|^\beta}$. The infinite matrix in Example \ref{ex} satisfies this condition but its inverse has only exponential decay.
\end{enumerate}
\end{rem}

Guided by that, in this section, our goal is to provide estimates on the rate of decay of the inverse when $A\in \bigcap_{p\in\mathbb{N}}\mathcal{E}_p$ with additional assumptions on the constants $C_p$ in the definition of $\mathcal{E}_p$ (cf. \eqref{*}).

Assume that the matrix type operator $A=(A_{s,t})_{\Lambda\times\Lambda}:\ell^2(\Lambda)\rightarrow \ell^2(\Lambda)$ satisfies
\begin{equation}\label{j1}
(\forall p\geq1)(\exists C_p\geq1)\,\,|A_{s,t}|\leq C_p e^{-p d(s,t)},\;\mbox{for all} \;s,t \in\Lambda.
\end{equation}
Let $\varphi$ be a strictly increasing function $\varphi:[1,\infty)\to [1,\infty)$ which satisfies the following condition:
\begin{equation}\label{imo}
\limsup_{p\to \infty}\frac{C_p}{e^{p\varphi(p)}}=K\in[1,\infty).
\end{equation}

As we do not want to consider band limited matrices, the constants $C_p$, $p\in [1,\infty)$, should not be bounded from above (otherwise, $A_{s,t}=0$ for all $s,t\in\Lambda$, $s\neq t$). Let $\tilde{K}>K$. Condition \eqref{imo} implies that $C_p\leq \tilde{K} e^{p\varphi(p)}$ for all $p \in [1,\infty)$ except for finitely many of them. We discuss the following cases.

\begin{enumerate}
\item If the function $\varphi$ is bounded, i.e. if there exists $k_0\geq 1$ such that $\varphi(p)\leq k_0$ for all $p$, then $C_p\leq \tilde K e^{k_0 p}$, for all but finitely many $p\in [1,\infty)$. So, for $d(s,t)> k_0$ one obtains $A_{s,t}=0$ (i.e., the band limited case).

\item If the function $\varphi$ is unbounded, i.e. $\lim_{p\to \infty}\varphi(p)= \infty$, then
$$|A_{s,t}|\leq K_1e^{p(\varphi(p)- d(s,t))}, \quad \mbox{for all}\,\, s,t\in\Lambda,\, p\geq 1,$$
with $K_1=\sup_{p\in[1,\infty)}C_pe^{-p\varphi(p)}\geq 1.$
\end{enumerate}
\begin{ex}
 Consider the function $\varphi(p)=c_0 p^h$, for some $c_0,h>0$. Then, $\lim_{p\to \infty}\varphi(p)= \infty$. As the function in the exponent has minimum at $\tilde{p}=(\frac{d(s,t)}{c_0(1+h)})^{1/h}$, one obtains
\begin{align*}
|A_{s,t}| & \leq K_1e^{c_0}e^{-d(s,t)}, \quad \mbox{for} \quad d(s,t)\leq c_0(1+h),\\
|A_{s,t}| & \leq K_1e^{-\tilde c_0 d(s,t)^{1+1/h}}, \quad \mbox{for} \quad d(s,t) >c_0(1+h),
\end{align*}
where $\tilde{c}_0=hc_0^{-1/h}(1+h)^{-1-1/h}$. So, this case includes matrices that are not necessarily banded, i.e. they can have all non-zero entries.
\end{ex}

For the next theorem we assume $\varphi:[1,\infty)\to [1,\infty)$ has the following properties:
\begin{itemize}
\item[$(a)$] $\lim_{p\to \infty} \varphi(p)=\infty$ and $\varphi(1)=1$,
\item[$(b)$] there exists $a>1$ such that $\varphi(\xi p)\leq \xi^{a-1}\varphi(p),\;p,\xi\in [1,\infty)$.
\end{itemize}
The fact that $\varphi$ is strictly increasing together with $(a)$ and $(b)$ implies that $\varphi$ is continuous, see Lemma \ref{app}, and consequently bijective.
\begin{ex}
The functions $\varphi(p)=  p^\alpha$, $\alpha>0$, and $\varphi(p)=\ln(p+e-1)$, $\varphi: [1,\infty)\to[1,\infty)$, provide examples which satisfy all of these conditions. Products of such functions also satisfy the above conditions.
\end{ex}

\begin{thm}\label{sjt}
Assume the matrix type operator $A=(A_{s,t})_{\Lambda\times\Lambda}:\ell^2(\Lambda)\rightarrow \ell^2(\Lambda)$ is invertible with $(A^{-1}_{s,t})_{\Lambda\times\Lambda}=A^{-1}$ being its inverse. If $A$ satisfies \eqref{j1} and \eqref{imo}, with $\varphi:[1,\infty)\rightarrow [1,\infty)$ a strictly increasing function which satisfies \eqref{imo}, $(a)$ and $(b)$, then there exist $C_A,b>0$ such that
	\begin{equation}\label{j2}
	|A^{-1}_{s,t}|\leq C_A e^{-b d(s,t)}, \quad \mbox{for all\;} s,t\in\Lambda.
	\end{equation}
Furthermore, \eqref{j2} holds true with
\begin{equation}\label{vckltr135}
b=\frac{\ln(1/r)}{\ln(\tilde{C}K_1^2 m_1^2r^{-1})+2\cdot 4^a}\,\,\, \mbox{and}\,\,\, C_A=\frac{2C_2m_1}{(1-r)\|A\|^2},
\end{equation}
where $r=\|\operatorname{Id}-\|A\|^{-2}AA^*\|$, $\tilde{C}=1+\|A\|^{-2}$, $K_1=\sup_{p\in[1,\infty)}C_pe^{-p\varphi(p)}\geq1$.
\end{thm}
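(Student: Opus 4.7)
The plan is to adapt the Jaffard-style argument from the proof of Theorem~\ref{Jaf detail}, tailoring it to the variable exponential rate allowed by~\eqref{imo}. As there, set $\tilde{A}=AA^*/\|A\|^2$ and $R=\operatorname{Id}-\tilde{A}$, so that $\|R\|\leq r<1$ and $\tilde{A}^{-1}=\operatorname{Id}+\sum_{n\geq 1}R^n$, and recover $A^{-1}=A^*\tilde{A}^{-1}/\|A\|^2$. The task reduces to producing a single exponential off-diagonal bound on $R^n_{s,t}$ which, combined with the trivial bound $|R^n_{s,t}|\leq r^n$ via a minimum, yields the desired estimate for $\tilde{A}^{-1}$. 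I first rewrite \eqref{imo} in the uniform form $|A_{s,t}|\leq K_1 e^{p\varphi(p)-pd(s,t)}$, valid for every $p\geq 1$ and every $s,t\in\Lambda$.

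The key estimate on $R$ comes from the uniform bound with the specific choice $p=4$. Apply it to both factors in $(AA^*)_{s,t}$ and split the exponent as $4=2+2$: the inequality $d(s,u)+d(u,t)\geq d(s,t)$ pulls out the factor $e^{-2d(s,t)}$, and the remaining $e^{-2d(s,u)}$ sums to $m_2$. Absorbing the diagonal $\delta_{s,t}$ contribution into the prefactor $\tilde{C}=1+\|A\|^{-2}$ (permitted since $K_1^2 e^{8\varphi(4)}m_2\geq 1$) gives
\begin{equation*}
 |R_{s,t}|\leq \tilde{C}K_1^2 e^{8\varphi(4)}m_2\, e^{-2d(s,t)}.
\end{equation*}
Condition~$(b)$ with $\xi=4$, $p=1$, together with $\varphi(1)=1$, yields $\varphi(4)\leq 4^{a-1}$, so $e^{8\varphi(4)}\leq e^{2\cdot 4^a}$. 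A standard iteration as in the proof of Theorem~\ref{Jaf detail}, sacrificing decay from rate $2$ down to rate $1$ at the cost of a factor $m_{2-1}=m_1$ per convolution, gives $|R^n_{s,t}|\leq B^n m_1^{n-1}e^{-d(s,t)}$ with $B=\tilde{C}K_1^2 e^{8\varphi(4)}m_2$. Using $m_2\leq m_1$ and $m_1\geq 1$ to absorb the extra factors, one obtains $|R^n_{s,t}|\leq B_1^n e^{-d(s,t)}$ with $B_1:=\tilde{C}K_1^2 m_1^2 e^{2\cdot 4^a}$.

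Combining this with $|R^n_{s,t}|\leq r^n$ via the minimum and choosing $n_0=\lfloor d(s,t)/\ln(B_1/r)\rfloor$, one has $B_1^{n_0}e^{-d(s,t)}\leq r^{n_0}$, and summing the two resulting geometric pieces gives
\begin{equation*}
 |\tilde{A}^{-1}_{s,t}|\leq \frac{2}{1-r}\,e^{-bd(s,t)},\qquad b=\frac{\ln(1/r)}{\ln(B_1/r)},
\end{equation*}
which matches~\eqref{vckltr135} via the identity $\ln(B_1/r)=\ln(\tilde{C}K_1^2 m_1^2 r^{-1})+2\cdot 4^a$. Note $b<1$ because $B_1>1$. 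To pass to $A^{-1}=A^*\tilde{A}^{-1}/\|A\|^2$, bound $|A_{u,s}|\leq C_2 e^{-2d(u,s)}$ and apply $d(u,t)\geq d(s,t)-d(s,u)$ to factor out $e^{-bd(s,t)}$; what remains is $\sum_u e^{-(2-b)d(s,u)}\leq m_{2-b}\leq m_1$, valid since $b<1$. Collecting constants produces the claimed $C_A=2C_2 m_1/((1-r)\|A\|^2)$. The main technical point is the parameter choice $p=4$ together with the internal split $q=2$: it produces the exponent $2\cdot 4^a$ through condition~$(b)$ while keeping the clean factor $m_1^2$; the rest is the routine balancing of geometric sums familiar from Theorem~\ref{Jaf detail}.
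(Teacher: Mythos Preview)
Your approach is genuinely different from the paper's, and in one respect more economical: you fix a single value $p=4$ and use only $\varphi(1)=1$ together with condition~$(b)$ to obtain $8\varphi(4)\leq 2\cdot 4^a$, whereas the paper keeps $p$ variable, specialises $p=\varphi^{-1}(n^{\varepsilon})$ for each $n$, and only recovers the stated $b$ after letting $\varepsilon\to 0$. In particular, your argument never invokes the unboundedness of $\varphi$ in condition~$(a)$ nor the continuity/bijectivity of $\varphi$, so it would apply under weaker hypotheses. The derivation of the rate $b$ is correct.

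There is, however, a gap in the claimed prefactor. From $|R^n_{s,t}|\leq \min\{r^n,\,B_1^n e^{-d(s,t)}\}$ and $n_0=\lfloor d(s,t)/\ln(B_1/r)\rfloor$, the tail $\sum_{n>n_0}r^n\leq (1-r)^{-1}e^{-bd(s,t)}$ is fine, but the head $\sum_{n=0}^{n_0}B_1^n e^{-d(s,t)}=\frac{B_1^{n_0+1}-1}{B_1-1}e^{-d(s,t)}$ is only bounded by $\frac{B_1}{B_1-1}\,r^{n_0}\leq \frac{B_1}{r(B_1-1)}e^{-bd(s,t)}$, and $\frac{B_1}{r(B_1-1)}\leq \frac{1}{1-r}$ fails when $r$ is small. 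The paper avoids this by keeping an \emph{extra} factor $e^{-d(s,t)}$ in reserve: its bound on $R^n$ has the form $(\text{stuff})^n e^{-\varphi^{-1}(n^\varepsilon)d(s,t)}\cdot e^{-d(s,t)}$, the comparison with $r^n$ is made using only the first two factors, and the surviving $e^{-d(s,t)}$ is what gives $S_1\leq (1-r)^{-1}e^{-d(s,t)}$. Your single factor $e^{-d(s,t)}$ is consumed entirely by the balancing, leaving nothing for the head. A simple fix in your framework is to start from $|R_{s,t}|\leq \tilde C K_1^2 e^{2(p+1)\varphi(p+1)}m_1 e^{-pd(s,t)}$ with a slightly larger $p$ and iterate down to rate $2$ rather than $1$, so that one unit of decay remains in reserve; but then the exponent $2\cdot 4^a$ changes, and you no longer hit the exact constants in~\eqref{vckltr135}.
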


\begin{proof} The fact that \eqref{j2} holds follows from \cite[Proposition 2]{sjaff}; we need to prove the validity of \eqref{j2} with the constants given in \eqref{vckltr135}. For this purpose, we keep the same notations for $\tilde{A}$ and $R$ given in the proof of Theorem \ref{Jaf detail}.\\
\indent One easily deduces that
\begin{equation*}
|\tilde{A}_{s,t}|\leq C_{p+1}^2\|A\|^{-2}m_1e^{-pd(s,t)},\,\, \mbox{for all}\,\, s,t\in\Lambda,\, p\geq 1,
\end{equation*}
which, in turn, yields
\begin{equation}\label{vstekp139}
|R_{s,t}|\leq \tilde{C}C_{p+1}^2m_1e^{-pd(s,t)},\,\, \mbox{for all}\,\, s,t\in\Lambda,\, p\geq 1,
\end{equation}
with $\tilde{C}$ as in the statement of the theorem. To estimate $R^n_{s,t}$, for the moment, we denote $\gamma_p=p+2$ and $\gamma'_p=p+1$. For $n\geq 2$, we infer
\begin{align*}
|R^n_{s,t}|
& \leq \sum_{s_1\in\Lambda}\dots \sum_{s_{n-1}\in\Lambda}
|R_{s,s_1}||R_{s_1,s_2}|\cdots|R_{s_{n-1},t}|\\
&\leq (\tilde{C}C^2_{p+3}m_1)^n\sum_{s_1\in\Lambda}\dots \sum_{s_{n-1}\in\Lambda} e^{-\gamma'_p d(s,s_1)}\cdots e^{-\gamma'_p d(s_{n-1},t)}\cdot \\
&\hspace{8.5cm} \cdot e^{-d(s,s_1)}\cdots e^{-d(s_{n-1},t)}\\
&\leq (\tilde{C}C^2_{p+3}m_1)^ne^{-\gamma'_p d(s,t)}\sum_{s_1\in\Lambda} \dots \sum_{s_{n-1}\in\Lambda}
e^{-d(s,s_1)}\cdots e^{-d(s_{n-2},s_{n-1})}\\
&\leq (\tilde{C}C^2_{p+3}m_1)^n m_1^{n-1}e^{-\gamma'_p d(s,t)}.
\end{align*}
Denoting $\varphi_1(p)=p\varphi(p)$, $p\geq 1$, we conclude
\begin{equation*}
|R^n_{s,t}| \leq (\tilde{C}C^2_{p+3}m^2_1)^ne^{-(p+1) d(s,t)}\leq (\tilde{C}K^2_1m^2_1)^ne^{2n\varphi_1(p+3)}e^{-(p+1) d(s,t)},
\end{equation*}
for all $s,t\in\Lambda,\, n,p\in \mathbb{Z}_+$ (for $n=1$ this trivially holds because of \eqref{vstekp139}). Let $\varepsilon>0$ be arbitrary but fixed. Specialising the above for $p=\varphi^{-1}(n^{\varepsilon})\geq 1$, we deduce
\begin{equation*}
|R^n_{s,t}| \leq (\tilde{C}K^2_1m^2_1)^ne^{2n\varphi_1(\varphi^{-1}(n^{\varepsilon})+3)}e^{-\varphi^{-1}(n^{\varepsilon})d(s,t)} e^{-d(s,t)},
\end{equation*}
for all $s,t\in\Lambda,\, n\in \mathbb{Z}_+,$ which, in turn, gives
\begin{equation}\label{vktspl137}
|\tilde{A}^{-1}_{s,t}|\leq \delta_{s,t}+
\sum_{n=1}^{\infty} \min\left\{r^n, (\tilde{C}K^2_1m^2_1)^ne^{2n\varphi_1(\varphi^{-1}(n^{\varepsilon})+3)}e^{-\varphi^{-1}(n^{\varepsilon})d(s,t)} e^{-d(s,t)}\right\},
\end{equation}
where $\delta_{s,t}$ stands for the Kronecker delta. Given $s,t\in\Lambda$, let $n_0=n_0(s,t,\varepsilon)\in\mathbb{N}$ be the largest $n\in\mathbb{N}$ such that
\begin{equation*}
n^{1+\varepsilon}(\ln(\tilde{C}K_1^2m_1^2r^{-1})+2\cdot 4^a)\leq d(s,t).
\end{equation*}
Since the left-hand side increases with $n$, the above holds true for all $n\leq n_0$; clearly
\begin{equation*}
n_0=\left\lfloor D(\varepsilon)d(s,t)^{1/(1+\varepsilon)}\right\rfloor,\,\,\, \mbox{with}\,\,\, D(\varepsilon)=(\ln(\tilde{C}K_1^2 m_1^2r^{-1})+2\cdot 4^a)^{-1/(1+\varepsilon)}.
\end{equation*}
To estimate $\tilde{A}^{-1}_{s,t}$, we split the series \eqref{vktspl137} at $n_0$ and write\footnote{In the following, we employ the principle of vacuous (empty) sum, i.e. $\sum_{j=1}^0r_j=0$; this can happen when $n_0=0$}
\begin{align*}
|\tilde{A}^{-1}_{s,t}|\leq \delta_{s,t} & +\sum_{n=1}^{n_0} (\tilde{C}K^2_1m^2_1)^ne^{2n\varphi_1(\varphi^{-1}(n^{\varepsilon})+3)}e^{-\varphi^{-1}(n^{\varepsilon})d(s,t)} e^{-d(s,t)}
 + \sum_{n=n_0+1}^{\infty}r^n \\
 & =\delta_{s,t}+S_1+S_2.
\end{align*}
As $D(\varepsilon)\geq D(0)=D=(\ln(\tilde{C}K_1^2 m_1^2r^{-1})+2\cdot 4^a)^{-1}$, we infer
\begin{equation*}
S_2=\frac{r^{n_0+1}}{1-r}\leq \frac{e^{-D\ln(1/r)d(s,t)^{1/(1+\varepsilon)}}}{1-r}.
\end{equation*}
To estimate $S_1$, we employ the monotonicity of $\varphi$, $\varphi(n^{\varepsilon})\geq1$ and the property $(b)$ to infer (for $1\leq n\leq n_0$)
\begin{align*}
&(\tilde{C}K^2_1m^2_1)^ne^{2n\varphi_1(\varphi^{-1}(n^{\varepsilon})+3)}=\\
&\hspace{2cm}  = r^n\exp\left(n\ln(\tilde{C}K_1^2m_1^2r^{-1})+2n(\varphi^{-1}(n^{\varepsilon})+3)\varphi(\varphi^{-1}(n^{\varepsilon})+3)\right)\\
&\hspace{2cm}  \leq r^n\exp\left(n\ln(\tilde{C}K_1^2m_1^2r^{-1})+8n\varphi^{-1}(n^{\varepsilon})\cdot 4^{a-1}\varphi(\varphi^{-1}(n^{\varepsilon}))\right)\\
&\hspace{2cm}  \leq r^n\exp\left(\varphi^{-1}(n^{\varepsilon})n^{1+\varepsilon}\left(\ln(\tilde{C}K_1^2m_1^2r^{-1})+2\cdot4^a\right)\right)\\
&\hspace{2cm}  \leq r^ne^{\varphi^{-1}(n^{\varepsilon})d(s,t)}.
\end{align*}
Consequently,
\begin{equation*}
\delta_{s,t}+S_1\leq e^{-d(s,t)}\sum_{n=0}^{n_0}r^n\leq \frac{e^{-d(s,t)}}{1-r}.
\end{equation*}
Thus,
\begin{equation*}
|\tilde{A}^{-1}_{s,t}|\leq \frac{e^{-d(s,t)}}{1-r} +\frac{e^{-D\ln(1/r)d(s,t)^{1/(1+\varepsilon)}}}{1-r},
\end{equation*}
and, as $\varepsilon>0$ is arbitrary, we conclude (notice that $D\ln(1/r)\leq1$)
\begin{equation*}
|\tilde{A}^{-1}_{s,t}|\leq \frac{2e^{-D\ln(1/r)d(s,t)}}{1-r},\,\,\, \mbox{for all}\,\, s,t\in\Lambda.
\end{equation*}
As $A^{-1}=A^*\tilde{A}^{-1}/\|A\|^2$, we conclude the validity of \eqref{j2} with $b$ and $C_A$ given by \eqref{vckltr135}.
\end{proof}

\section*{Appendix}
\begin{lemma}\label{app}
Let $\varphi:[1,\infty)\rightarrow[1,\infty)$ be strictly increasing function which satisfies the following properties:
\begin{itemize}
\item[$(a)$] $\lim_{p\to \infty} \varphi(p)=\infty$ and $\varphi(1)=1$,
\item[$(b)$] there exists $a>1$ such that $\varphi(\xi p)\leq \xi^{a-1}\varphi(p),\;p,\xi\in [1,\infty)$.
\end{itemize}
Then $\varphi$ is continuous. 
\end{lemma}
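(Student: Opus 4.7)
The plan is to use the monotonicity of $\varphi$ to reduce continuity to controlling the one-sided limits, and then exploit property $(b)$ with $\xi$ close to $1$ to pinch those limits against $\varphi(p_0)$ from both sides.

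Since $\varphi$ is monotone, the one-sided limits $\varphi(p_0^-)$ (for $p_0>1$) and $\varphi(p_0^+)$ exist at every $p_0$ in the domain, and it suffices to show $\varphi(p_0^-)=\varphi(p_0)=\varphi(p_0^+)$.

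For right-continuity at an arbitrary $p_0\geq 1$, I would apply $(b)$ with $p=p_0$ and $\xi>1$ to obtain $\varphi(\xi p_0)\leq \xi^{a-1}\varphi(p_0)$; letting $\xi\to 1^+$ yields $\varphi(p_0^+)\leq \varphi(p_0)$, while monotonicity gives $\varphi(p_0^+)\geq \varphi(p_0)$, so equality holds. (At $p_0=1$ this uses $\varphi(1)=1$ to conclude $\varphi(1^+)\leq 1$.)

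For left-continuity at $p_0>1$, I would reverse the roles: for $1\leq q<p_0$, set $\xi=p_0/q>1$ and $p=q$ in $(b)$ to obtain
\[
\varphi(p_0)=\varphi(\xi q)\leq \xi^{a-1}\varphi(q)=\left(\frac{p_0}{q}\right)^{a-1}\varphi(q),
\]
which rearranges to $\varphi(q)\geq (q/p_0)^{a-1}\varphi(p_0)$. Letting $q\to p_0^-$ gives $\varphi(p_0^-)\geq \varphi(p_0)$, and monotonicity gives the reverse inequality, so $\varphi(p_0^-)=\varphi(p_0)$.

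There is no real obstacle: the whole argument is just the observation that the scaling bound $(b)$ forces the multiplicative factor $\xi^{a-1}$ to tend to $1$ as $\xi\to 1$, which squeezes out any potential jump. Property $(a)$ is used only to anchor $\varphi(1)=1$ for the endpoint case; the assumptions $\varphi$ strictly increasing and $\lim_{p\to\infty}\varphi(p)=\infty$ are not needed for continuity itself.
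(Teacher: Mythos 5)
Your proof is correct and follows essentially the same route as the paper: both arguments use property $(b)$ with the scaling factor $\xi=t_0/t$ (resp.\ $t/t_0$) tending to $1$ to squeeze the one-sided limits against $\varphi(t_0)$; the paper merely packages this as a contradiction argument with $\sup$ and $\inf$ instead of your direct computation of $\varphi(p_0^\pm)$. Your closing observation that strict monotonicity and condition $(a)$ are inessential for continuity is also accurate.
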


\begin{proof}
Let $t_0\in(1,\infty)$. Denote
$$
A=\sup_{t\in[1,t_0)}\varphi(t),\,\,\, B=\inf_{t\in(t_0,\infty)}\varphi(t).
$$
Since $\varphi$ is strictly increasing, one obtains that $1\leq A\leq \varphi(t_0)$ and $B\geq \varphi(t_0)$. We claim $A=\varphi(t_0)=B$. Assume that $A<\varphi(t_0)$. Then there exists $\varepsilon>0$ such that $A<\varphi(t_0)-\varepsilon$. For every $t\in[1,t_0),$ employing $(b),$ one deduces
$$
\varphi(t_0)=\varphi\left(t\cdot\frac{t_0}{t}\right)\leq \left(\frac{t_0}{t}\right)^{a-1}\varphi(t) <\left(\frac{t_0}{t}\right)^{a-1}(\varphi(t_0)-\varepsilon).
$$
When $t\rightarrow t_0^-,$ it leads to a contradiction. Similarly, if one assumes $B>\varphi(t_0)$, there exists $\varepsilon>0$ such that $B>\varphi(t_0)+\varepsilon$. For $t\in(t_0,\infty)$, we infer
$$
\varphi(t_0)+\varepsilon<\varphi(t)=\varphi\left(t_0\cdot\frac{t}{t_0}\right)\leq \left(\frac{t}{t_0}\right)^{a-1}\varphi(t_0),
$$
which is a contradiction if one lets $t\rightarrow t_0^+$.Thus $A=\varphi(t_0)=B$. As $\varphi$ is strictly increasing, the latter immediately implies that $\varphi$ is continuous at $t_0\in(1,\infty)$. The continuity at $t_0=1$ can be proved in the similar manner. 
\end{proof}

%%%%%%%%% REFERENCES %%%%%%%%%%%%%%%%%%%%

%%%%%%%%%%%%%%%%%%%%%%%%%%%%%%%%%%%%%%%%%%%%
\section*{Acknowledgement}

The work of S. Pilipovi\'c is supported by the Ministry of Education, Science and Technological Development of the Republic of Serbia [research project 174024].

The work of M. \v Zigi\' c is partially supported by the Ministry of Education, Science and Technological Development of the Republic of Serbia [research project 174024] and the Provincial Secretariat for Science of Vojvodina [research project APV 142-451-2102/2019].

The work of B. Prangoski was partially supported by the bilateral project "Microlocal analysis and applications" funded by the Macedonian and Serbian academies of sciences and arts.
%%%%%%%%%%%%%%%%%%%%%%%%%%%%%%%%%%%%%%%%%%%

%%%%%
\end{document}